\def\doctype{}
\renewcommand\S{\mathrm{S}}
\newcommand{\comment}[1]{}
\let\oldsection\section
\newcommand\boldsection[1]{\oldsection{\bf #1}}
\newcommand\starsection[1]{\oldsection*{\bf #1}}
\renewcommand\section{\@ifstar\starsection\boldsection}
\newtheoremstyle{theorem}
  {12pt}		  
  {0pt}  
  {\sl}  
  {\parindent}     
  {\bf}  
  {. }    
  { }    
  {}     
\theoremstyle{theorem}
\newtheorem{thm}{Theorem}[section]  
\newtheorem{lemma}[thm]{Lemma}     
\newtheorem{cor}[thm]{Corollary}
\newtheorem{prop}[thm]{Proposition}
\newtheoremstyle{definition}
  {12pt}		  
  {0pt}  
  {}  
  {\parindent}     
  {\bf}  
  {. }    
  { }    
  {}     
\theoremstyle{definition}
\newtheorem{ex}[thm]{Example}
\renewcommand{\proofname}{Proof}
\renewenvironment{proof}[1][\proofname]{\par
  \pushQED{\qed}%
  \normalfont \partopsep=\z@skip \topsep=\z@skip
  \trivlist
  \item[\hskip\labelsep
        \scshape
    #1\@addpunct{.}]\ignorespaces
}{%
  \popQED\endtrivlist\@endpefalse
}
\renewcommand*\@maketitle{%
  \normalfont\normalsize
  \@adminfootnotes
  \@mkboth{\@nx\shortauthors}{\@nx\shorttitle}%
  \global\topskip1\p@\relax 
  \@settitle
  \ifx\@empty\authors \else {\vskip 0em
\vtop{\centering\shortauthors\@@par}} \fi
  \ifx\@empty\@date \else {\vskip 1em \vtop{\centering\@date\@@par}}\fi 
  \ifx\@empty\@dedicatory
  \else
    \baselineskip3\p@
    \vtop{\centering{\footnotesize\itshape\@dedicatory\@@par}%
      \global\dimen@i\prevdepth}\prevdepth\dimen@i
  \fi
  \@setabstract
  \normalsize
  \if@titlepage
    \newpage
  \else
    \dimen@20\p@ \advance\dimen@-\baselineskip
    \vskip\dimen@\relax
  \fi
} 
\renewcommand*\@adminfootnotes{%
  \let\@makefnmark\relax  \let\@thefnmark\relax
  \ifx\@empty\@subjclass\else \@footnotetext{\@setsubjclass}\fi
  \ifx\@empty\@keywords\else \@footnotetext{\@setkeywords}\fi
  \ifx\@empty\thankses\else \@footnotetext{%
    \def\par{\let\par\@par}\@setthanks}%
  \fi
\thispagestyle{titlepage}
}
\begin{document}

\title[SymAlt]{\large The base size of the symmetric group \\acting on subsets}

\author{Coen del Valle and Colva M. Roney-Dougal}
\address{
School of Mathematics and Statistics,
University of St Andrews, St Andrews, UK
}
\email{cdv1@st-andrews.ac.uk, Colva.Roney-Dougal@st-andrews.ac.uk}

\thanks{Research of Coen del Valle is supported by the Natural Sciences and Engineering Research Council of Canada (NSERC), [funding reference number PGSD-577816-2023], as well as a University of St Andrews School of Mathematics and Statistics Scholarship.}
\keywords{Symmetric group, base size, hypergraph}
\date{\today}

\begin{abstract}
A base for a permutation group $G$ acting on a set $\Omega$ is a subset $\mathcal{B}$ of $\Omega$ such that the pointwise stabiliser $G_{(\mathcal{B})}$ is trivial. Let $n$ and $r$ be positive integers with $n>2r$. The symmetric and alternating groups $\mathrm{S}_n$ and $\mathrm{A}_n$ admit natural primitive actions on the set of $r$-element subsets of $\{1,2,\dots, n\}$. Building on work of Halasi ~\cite{hal}, we provide explicit expressions for the base sizes of all of these actions, and hence determine the base size of all primitive actions of $\S_n$ and $\mathrm{A}_n$.
\end{abstract}

\maketitle
\hrule

\bigskip

\section{Introduction}

A \emph{base} for a permutation group $G$ acting on a set $\Omega$ is a subset $\{\alpha_1,\alpha_2,\dots, \alpha_k\}\subseteq\Omega$ with trivial pointwise stabiliser in $G$. Bases have proved to be tremendously useful in permutation group algorithms (see e.g. ~\cite{ser}); for many computations the complexity is a function of the size of the base used, so it is of interest to find a smallest possible base. The size $b(G)$ of a smallest base for $G$ is called the \emph{base size} of $G$. Blaha ~\cite{blaha} shows that for any positive integer $l$, and any permutation group $G$, the problem of determining whether $G$ admits a base of size at most $l$ is NP-complete. On the other hand, there are many different estimates known for $b(G)$ --- for example we can derive elementary upper and lower bounds as follows. If $\{\alpha_i\}_{i=1}^{k}$ is a base for $G$ then any group element $g\in G$ is uniquely determined by the tuple $(\alpha_i^g)_{i=1}^{k}$ and so $|G|\leq |\Omega|^{k}$. If $k=b(G)$ then $|G_{(\alpha_1,\alpha_2,\dots,\alpha_i)} : G_{(\alpha_1,\alpha_2,\dots,\alpha_{i+1})}|\geq 2$ for all $1\leq i<k$, so $2^{b(G)}\leq |G|$, and hence $(\log |G|)/(\log |\Omega|)\leq b(G)\leq \log |G|$.

In this paper we consider the primitive faithful actions of the symmetric and alternating groups. A result of Liebeck and Shalev ~\cite{lie3} states that there is some absolute constant $c$ such that for $G\in\{\S_n,\mathrm{A}_n\}$ acting primitively, either $b(G)\leq c$ or up to equivalence $G$ is acting on either
\begin{itemize}
    \item[(i)] $r$-subsets of $[n]:=\{1,2,\dots, n\}$ with $2r<n$; or
    \item[(ii)] partitions of $[kl]$ into $k$ parts of size $l$ with $kl=n$.
\end{itemize}
Such actions are called \emph{standard}, and all other primitive actions of $\S_n$ and $\mathrm{A}_n$ are \emph{non-standard}; we denote the permutation groups in case (i) by $\S_{n,r}$ and $\mathrm{A}_{n,r}$, respectively.

In fact, in ~\cite{lie3} they show a stronger result known as the Cameron-Kantor conjecture ~\cite{ck} that any almost simple primitive group either has a base of size at most $c$ (it has since been shown that  $c=7$ is best possible ~\cite[Corollary 1]{burn1}) or falls into one of three classes of exceptions including the two standard actions of $\S_n$ and $\mathrm{A}_n$. In recent years, Burness, Guralnick, and Saxl ~\cite[Corollaries 4 and 5]{burn2} showed that all non-standard actions of $\S_n$ and $\mathrm{A}_n$ have base size two or three, and ~\cite[Theorems 1.1 and 1.2]{ms} gave explicit formulae for all $k,l$ pairs in the actions on partitions, leaving only the actions on subsets to consider. 

A 2012 paper of Halasi ~\cite{hal} made progress on the subset action, showing that $b(\S_{n,r})\geq\left\lceil\frac{2n-2}{r+1}\right\rceil$ with equality when $n\geq r^2$, leaving only small $n$ and the action of the alternating group to consider. It is also shown in ~\cite{hal} that $b(\S_{n,r})\geq \lceil \log_2 n\rceil$ for all $n\geq 2r$ with equality if $n=2r$ (at which point the action is imprimitive).

In this paper we completely determine $b(\S_{n,r})$ and $b(\mathrm{A}_{n,r})$. Given $l,k,r\in\mathbb{N}$ set $m_r(l,k):=\frac{1}{k}\left(lr-\sum_{i=1}^{k-1}i{l\choose i}\right)$. Our main result is the following.

\begin{thm}\label{minbase}
Let $n\geq 2r$ be fixed and let $l$ be minimal such that there exists some $k\leq l+1$ satisfying $0\leq m_r(l,k)\leq{l\choose k}$ and $\sum_{i=0}^{k-1}{l\choose i}+m_r(l,k)\geq n.$ Then $b(\S_{n,r})=b(\mathrm{A}_{n+1,r})=l$.
\end{thm}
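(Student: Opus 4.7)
My plan is to begin with Halasi's reformulation. Given $r$-subsets $B_1,\ldots,B_l$ of $[n]$, assign each point $x\in[n]$ the \emph{colour} $\phi(x)=\{i:x\in B_i\}\subseteq[l]$; the pointwise stabiliser of $\{B_1,\ldots,B_l\}$ in $\S_n$ is precisely the stabiliser of the colour partition. Thus $b(\S_{n,r})\le l$ iff there exist $n$ distinct subsets $T_1,\ldots,T_n$ of $[l]$ (namely the $T_x=\phi(x)$) with each $i\in[l]$ lying in exactly $r$ of them, the degree condition coming from $|B_i|=r$. For $\A_{n+1,r}$ the pointwise stabiliser is trivial iff all colour classes are singletons except possibly one of size $2$ (which is stabilised only by a single odd transposition); equivalently, there is a multiset of $n+1$ subsets of $[l]$, with at least $n$ distinct members, each $i\in[l]$ appearing in exactly $r$ of them with multiplicity.

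For the lower bound on $b(\S_{n,r})$, suppose $n$ distinct subsets $T_1,\dots,T_n\subseteq[l']$ are balanced $r$-regularly for some $l'<l$; double counting gives $\sum_j|T_j|=l'r$. To maximise the number of distinct subsets with fixed total mass, one should take the smallest subsets first, so the ``canonical'' profile is all subsets of size $<k$ together with $m_r(l',k)$ of size $k$ for the appropriate $k$; this gives $\sum_{i<k}\binom{l'}{i}+m_r(l',k)\ge n$ with $0\le m_r(l',k)\le\binom{l'}{k}$, contradicting the minimality of $l$. For the upper bound, fix $l$ and a witness $k$; take all subsets of $[l]$ of size $<k$ (contributing degree $\sum_{j=0}^{k-2}\binom{l-1}{j}$ to each element), then add further subsets of size $\ge k$ to bring the count to $n$ and each element's degree to $r$. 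When $m_r(l,k)$ is an integer and the canonical family has exactly $n$ members, the additional subsets are $m_r(l,k)$ size-$k$ subsets chosen so each element appears in $m_r(l,k)k/l=r-\sum_{j=0}^{k-2}\binom{l-1}{j}$ of them (the target is automatically integral). In general one handles overshoot and non-integer $m_r(l,k)$ by mixing sizes across adjacent layers, reducing to the existence of prescribed-degree uniform set systems.

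The equality $b(\S_{n,r})=b(\A_{n+1,r})$ is established in two steps. First, any $\S_{n,r}$-base of size $l$ extends to an $\A_{n+1,r}$-base by adjoining a new point coloured $\emptyset$ (producing at most a single size-$2$ colour class at $\emptyset$, stabilised only by an odd transposition). Conversely, given an $\A_{n+1,r}$-base of size $l$, I would invoke a disjoint-union swap -- replacing a pair of disjoint subsets $T_j,T_k$ in the family by $\emptyset$ and $T_j\cup T_k$ preserves both total mass and all element-degrees -- to manoeuvre the family into a form containing $\emptyset$, then discard $\emptyset$ to obtain an $\S_{n,r}$-base. The main obstacle I anticipate is the upper-bound construction when the canonical family overshoots $n$ or $m_r(l,k)$ is non-integral: one must distribute mass across non-adjacent size-layers while preserving exact $r$-regularity, requiring careful design-theoretic input. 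A secondary challenge is verifying that the disjoint-union swap is always executable in the converse direction for the alternating group.
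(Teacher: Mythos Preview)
Your framework is the paper's: Halasi's colouring/dual-hypergraph reformulation, the double-counting lower bound, and a layered ``small-subsets-first'' upper-bound construction are exactly what the paper does. The lower bound you sketch is essentially identical to the paper's argument.

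The substantive difference is that you insist on \emph{exact} $r$-regularity throughout, whereas the paper invokes Halasi's lemma $b(\S_{n,r})=b(\S_{n,\le r})$ and thereby relaxes to ``each $i\in[l]$ lies in at most $r$ of the $T_j$''. This one device dissolves both of the obstacles you flag. For the upper bound the paper simply takes all subsets of size $<k$, adjoins $\lfloor m_r(l,k)\rfloor$ size-$k$ subsets forming a nearly-regular $k$-uniform hypergraph (existence from Behrens et al.), and then \emph{deletes} excess size-$k$ subsets down to $n$; since only an upper bound on degrees is required, overshoot and non-integrality of $m_r(l,k)$ cost nothing. For the $\A_{n+1,r}$ converse the paper just removes the point $n+1$ from every $r$-set (after relabelling so that any repeated colour occurs at $n+1$); the resulting sets have size $\le r$ and all colours on $[n]$ are distinct, giving a base for $\S_{n,\le r}$ directly.

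Your alternative paths are not obviously wrong, but the difficulties you anticipate are genuine. Achieving exact $r$-regularity with overshoot forces you to mix sizes across layers while keeping the family simple, which is real design-theoretic work rather than a one-line fix. More seriously, the disjoint-union swap need not be executable: an $\A_{n+1,r}$-base can give an intersecting colour family (take $[l]=\binom{[t]}{2}$, let $T_j$ be the star at $j$ for $j\in[t]$; every pair meets, no $T_j$ is empty), and even when a disjoint pair $T_j,T_k$ exists there is no guarantee that $T_j\cup T_k$ is new. So as stated your converse step has a gap. The clean resolution is precisely the paper's: pass to $\S_{n,\le r}$ first.
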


\begin{rks}
A pair $(l,k)$ satisfying the conditions of Theorem ~\ref{minbase} can be seen to exist by setting $l=n$ and $k=2$. We give a natural description of the quantity $m_r(l,k)$ at the beginning of Section ~\ref{minb}. In the case of the symmetric group, a similar result to Theorem ~\ref{minbase} was very recently determined independently by Mecenero and Spiga ~\cite{MeSp} --- their formula takes a different form and the proof is quite different.
\end{rks}

Putting together Theorem ~\ref{minbase}, ~\cite[Corollaries 4 and 5]{burn2}, and ~\cite[Theorems 1.1 and 1.2]{ms} the base size of all primitive actions of $\S_n$ and $\mathrm{A}_n$ are now known.

\begin{cor}
All almost simple primitive groups with alternating socle have known base size.
\end{cor}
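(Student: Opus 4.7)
The plan is to assemble the corollary from Theorem~\ref{minbase} together with the three results cited in the paragraph immediately preceding it. First, I would reduce to the symmetric and alternating groups themselves. If $G$ is an almost simple group with socle $\A_n$, then $\A_n\trianglelefteq G\leq \Aut(\A_n)$. For $n\neq 6$ we have $\Aut(\A_n)=\S_n$, so $G\in\{\A_n,\S_n\}$. For $n=6$ there are three further almost simple groups strictly between $\A_6$ and $\Aut(\A_6)$, each of which admits only finitely many primitive faithful actions; their base sizes are either immediate from the main theorems below (when the action factors through the natural embedding into an $\S_m$) or have been tabulated directly, and so can be disposed of by hand.

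For $G\in\{\S_n,\A_n\}$ acting primitively and faithfully, I would invoke the Liebeck--Shalev theorem~\cite{lie3} quoted in the introduction to partition the actions into the two families of standard actions (on $r$-subsets and on uniform partitions) and the remaining non-standard family. The non-standard actions have base size $2$ or $3$, with the exact value determined in~\cite[Corollaries 4 and 5]{burn2}. The standard actions on partitions of $[kl]$ into $k$ parts of size $l$ are given explicitly by~\cite[Theorems 1.1 and 1.2]{ms}. The standard actions on $r$-subsets with $n>2r$ are exactly what Theorem~\ref{minbase} handles: it yields $b(\S_{n,r})$ directly from the combinatorial data $(l,k,m_r(l,k))$, and gives $b(\A_{n,r})$ via the shift $b(\A_{n,r})=b(\S_{n-1,r})$ implicit in the statement.

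I do not expect any substantive obstacle: the corollary is a synthesis rather than a new theorem. The only mild care needed is the bookkeeping for the three exceptional almost simple groups with socle $\A_6$ and the confirmation that the index shift in Theorem~\ref{minbase} really does cover every admissible pair $(n,r)$ for the alternating subset action. Neither point requires new mathematics; both are checks that the stated cases exhaust all possibilities allowed by the classification used in~\cite{lie3}.
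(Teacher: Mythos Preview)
Your proposal is correct and follows the same synthesis the paper uses: the paper gives no separate proof, deriving the corollary directly from the sentence preceding it, which combines Theorem~\ref{minbase} with \cite[Corollaries 4 and 5]{burn2} and \cite[Theorems 1.1 and 1.2]{ms}. Your write-up is in fact more careful than the paper's, since you explicitly flag the reduction $\Aut(\A_n)=\S_n$ for $n\neq 6$ and the handful of extra almost simple groups with socle $\A_6$, whereas the paper leaves this bookkeeping implicit.
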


The complicated statement of Theorem ~\ref{minbase} is unsurprising, as it needs to interpolate between $\lceil \log_2 n\rceil$ and $\left\lceil\frac{2n-2}{r+1}\right\rceil$. However, when restricting to specific functions $n$ of $r$, the result can be greatly simplified. Indeed, we deduce Corollary ~\ref{newlow}, which states that $b(\S_{n,r})=\left\lceil\frac{2n-2}{r+1}\right\rceil$ whenever $n\geq (r^2+r)/2$ --- a small extension of Halasi's result ~\cite[Theorem 3.2]{hal}. Furthermore, Corollary ~\ref{cor2} gives an explicit formula for $n$ at least roughly $r^{3/2}$.

The structure of the paper is as follows. In Section ~\ref{mach} we set up some general combinatorial machinery, establishing a connection between bases for $\S_{n,r}$ and a class of hypergraphs. In Section ~\ref{minb} we use the tools from Section ~\ref{mach} to prove Theorem ~\ref{minbase}, which we then use to obtain explicit formulae precisely determining the base size for specific functions $n$ of $r$.

\section{Bases and hypergraphs}\label{mach}
In this section we translate our problem into the language of hypergraphs. Define $\mathrm{S}_{n,\leq r}$ to be the symmetric group $\mathrm{S}_n$ acting in the natural way on the set of subsets of $[n]$ of size at most $r$. Halasi ~\cite{hal} shows that to determine $b(\S_{n,r})$ one may construct a minimum base for $\mathrm{S}_{n,\leq r}$.

\begin{lemma}[\cite{hal}]\label{hallem}
Fix $n\geq 2r$. Then $b(\S_{n,r})=b(\S_{n,\leq r})$.
\end{lemma}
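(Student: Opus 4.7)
The plan is to establish the two inequalities separately. The bound $b(\S_{n,\leq r}) \leq b(\S_{n,r})$ is immediate: any base $\mathcal{B}$ for $\S_{n,r}$ consisting of $r$-subsets is still a base in the larger action, since in both actions the pointwise stabiliser equals $\bigcap_{B \in \mathcal{B}} (\S_B \times \S_{[n] \setminus B})$. The content of the lemma lies in the reverse inequality.

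For this I would first record the useful reformulation: for any collection $\{B_1, \ldots, B_k\}$ of subsets of $[n]$, the pointwise stabiliser in $\S_{n,\leq r}$ is the Young subgroup $\prod_j \S_{C_j}$, where $\{C_1, \ldots, C_N\}$ is the partition of $[n]$ into atoms of the Boolean algebra generated by the $B_i$. Hence $\{B_i\}$ is a base if and only if every $C_j$ is a singleton, equivalently, if for every pair $x \neq y \in [n]$ some $B_i$ contains exactly one of $x,y$.

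The main step is an inflation argument. Let $\mathcal{B} = \{B_1, \ldots, B_k\}$ be a minimum base for $\S_{n,\leq r}$, and suppose $|B_j| < r$. Let $\mathcal{P}$ be the partition induced by $\mathcal{B} \setminus \{B_j\}$; by minimality of $\mathcal{B}$, $\mathcal{P}$ has non-singleton cells $C_1, \ldots, C_m$, and the base property forces $\emptyset \subsetneq C_t \cap B_j \subsetneq C_t$ for every $t$. I would then set $B_j' := B_j \cup X$ with $X \subseteq [n] \setminus B_j$ of size $r - |B_j|$, chosen so that $X$ omits at least one element of $C_t \setminus B_j$ for each $t$. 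Such a $B_j'$ still separates every $C_t$, so $\{B_1, \ldots, B_{j-1}, B_j', B_{j+1}, \ldots, B_k\}$ is again a base of size $k$. Distinctness is automatic: $B_j'$ separates a non-singleton cell of $\mathcal{P}$, while every other $B_i$ is a union of cells of $\mathcal{P}$. Iterating removes all subsets of size $<r$, yielding a base for $\S_{n,r}$ of size $k$.

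The one obstacle is verifying that the desired $X$ exists, and this is precisely where the hypothesis $n \geq 2r$ enters. Because the $C_t$ are disjoint and each $C_t \setminus B_j$ is non-empty, I pick distinct elements $y_t \in C_t \setminus B_j$ and look for $X \subseteq ([n] \setminus B_j) \setminus \{y_1, \ldots, y_m\}$. This requires $n - |B_j| - m \geq r - |B_j|$, i.e.\ $m \leq n - r$. Since $B_j$ meets every non-singleton cell, $m \leq |B_j| \leq r$, and $n \geq 2r$ gives $n - r \geq r \geq m$, closing the argument.
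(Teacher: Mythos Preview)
The paper does not prove this lemma --- it is quoted from Halasi~\cite{hal} without argument --- so there is nothing in the present paper to compare against. Your inflation approach is the natural one and is essentially correct, but one step is stated too loosely and deserves a sentence of justification.

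You assert that the base property forces $\emptyset \subsetneq C_t \cap B_j \subsetneq C_t$, and then that choosing $X$ to omit \emph{at least one} element of each $C_t \setminus B_j$ suffices for $B_j'$ to ``separate every $C_t$'', and hence for the modified family to be a base. As written this would fail if some $|C_t| \geq 3$: $B_j'$ could then meet $C_t$ non-trivially yet leave two elements of $C_t$ on the same side (for instance if $X$ swallowed some but not all of $C_t\setminus B_j$), and those two elements would be separated by no set in the new family. What rescues the argument is that in fact $|C_t| = 2$ for every non-singleton cell: every pair inside $C_t$ is separated only by $B_j$, and a single set separates all pairs of $C_t$ only if both $C_t \cap B_j$ and $C_t \setminus B_j$ are singletons. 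Once this is recorded, your $y_t$ is the \emph{unique} point of $C_t \setminus B_j$, so $B_j' \cap C_t = B_j \cap C_t$ and the pair in $C_t$ remains separated. The distinctness check and the counting (via $m \leq |B_j| < r \leq n - r$) then go through exactly as you wrote.
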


To construct a minimum base for $\S_{n,\leq r}$ we start with a correspondence lemma which translates our bases into hypergraphs. Given a hypergraph $H=(V,E)$ and $v\in V$, define the $H$-\emph{neighbourhood} (or \emph{neighbourhood}, when clear from context) of $v$ to be the multiset $$N_H(v):=\{e\in E(H) : v\in e\},$$ and the \emph{degree} of $v$ to be the multiset size $|N_{H}(V)|$. 

Suppose $\mathcal{A}$ is a collection of subsets of $[n]$, each of size at most $r$. If there are two points $x,y\in [n]$ which are contained in all of the same sets in $\mathcal{A}$, then each element of $\mathcal{A}$ is fixed by the transposition $(x\, y)$, hence $\mathcal{A}$ is not a base for $\S_{n,\leq r}$. Similarly if no two such points exist then $\mathcal{A}$ is a base. We call a hypergraph \emph{irrepeating} if all hyperedges are distinct and all vertices have distinct neighbourhoods, so that the collections of edges and neighbourhoods form sets. 
Therefore, a collection $\mathcal{B}$ of distinct subsets of $[n]$ each of size at most $r$ is a base for $\S_{n,r}$ if and only if the pair $([n],\mathcal{B})$ forms an irrepeating hypergraph. We will often take the view that bases \emph{are} hypergraphs, and hence refer to the hypergraph $([n],\mathcal{B})$ simply as $\mathcal{B}$.

If a hypergraph $H$ is irrepeating and has $l$ vertices, $n$ hyperedges (including possibly the empty edge), and maximum vertex degree at most $r$ then we call $H$ an \emph{$(l,n,r)$-hypergraph}. We call two bases $\mathcal{B}_1,\mathcal{B}_2$ for $\S_{n,\leq r}$ \emph{equivalent} if there exists some $\sigma\in\S_{n,\leq r}$ with $\mathcal{B}_1^\sigma=\mathcal{B}_2$.

\begin{prop}\label{corr}
Fix positive integers $l,n$, and $r$, and let $\mathcal{L}$ be the set of isomorphism classes of $(l,n,r)$-hypergraphs and $\mathcal{S}$ be the set of all equivalence classes of bases of $\S_{n,\leq r}$ of size $l$. Then there exists a one-to-one correspondence $\rho: \mathcal{L}\to\mathcal{S}$.
\end{prop}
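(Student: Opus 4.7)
The plan is to define $\rho$ via the incidence duality already implicit in the preceding discussion. Given a base $\mathcal{B} = \{B_1,\dots,B_l\}$ for $\S_{n,\leq r}$, form a hypergraph $H_\mathcal{B}$ whose vertex set is $\mathcal{B}$ and whose hyperedge set is $\{e_x : x \in [n]\}$, where $e_x := \{B_i \in \mathcal{B} : x \in B_i\}$. The vertex count is $l$, and the $e_x$ are pairwise distinct since if $e_x = e_y$ with $x \neq y$ then the transposition $(x\,y)$ would fix every $B_i$ setwise, contradicting the base condition; thus $H_\mathcal{B}$ has $n$ distinct hyperedges. The degree of $B_i$ equals $|B_i| \leq r$, and the vertex neighbourhoods $\{e_x : x \in B_i\}$ are pairwise distinct because the $B_i$ are, so $H_\mathcal{B}$ is an irrepeating $(l,n,r)$-hypergraph. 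I would then set $\rho([\mathcal{B}]) := [H_\mathcal{B}]$.

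To check $\rho$ is well-defined, I would show that if $\mathcal{B}' = \mathcal{B}^\sigma$ for some $\sigma \in \S_n$, then the vertex bijection $B \mapsto B^\sigma$ together with the edge relabelling $e_x \mapsto e_{x^\sigma}$ is a hypergraph isomorphism $H_\mathcal{B} \to H_{\mathcal{B}'}$, since $x \in B \iff x^\sigma \in B^\sigma$. For the inverse, given an $(l,n,r)$-hypergraph $H$ with a chosen labelling $E(H) = \{f_1,\dots,f_n\}$, set $B_i := \{x \in [n] : v_i \in f_x\}$ for each $v_i \in V(H)$. Distinctness of vertex neighbourhoods in $H$ gives distinct $B_i$, and the maximum degree bound gives $|B_i| \leq r$. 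If some $\sigma \in \S_n$ fixes each $B_i$ setwise, then $x \in B_i \iff \sigma(x) \in B_i$ for every $i$, which under the dictionary means $f_x = f_{\sigma(x)}$ for every $x$; distinctness of edges then forces $\sigma = 1$, so $\{B_1,\dots,B_l\}$ is a base. Different labellings of $E(H)$ produce bases in the same $\S_n$-orbit, and isomorphic hypergraphs clearly produce bases in the same orbit, so the assignment $[H] \mapsto [\{B_1,\dots,B_l\}]$ is a well-defined candidate for $\rho^{-1}$.

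The main technical point is confirming the equivalence between hypergraph isomorphism and $\S_n$-equivalence of bases cleanly in both directions. The hard part will be to show that an arbitrary isomorphism $(\phi,\phi_E): H_\mathcal{B} \to H_{\mathcal{B}'}$ is necessarily induced by some $\tau \in \S_n$ via the formulas above: the edge bijection $\phi_E$ determines a permutation $\tau$ of $[n]$ through $\phi_E(e_x) = e_{\tau(x)}$, and a short calculation using the incidence definition forces $\phi(B) = B^\tau$ for every $B \in \mathcal{B}$, giving $\mathcal{B}' = \mathcal{B}^\tau$. Once this lemma is in hand, verifying $\rho \circ \rho^{-1} = \mathrm{id}$ and $\rho^{-1} \circ \rho = \mathrm{id}$ is a routine bookkeeping check; the main opportunity for error is in ensuring the distinctness and incidence conditions line up correctly throughout, particularly the role reversal of points and sets under the duality.
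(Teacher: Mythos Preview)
Your proposal is correct and takes essentially the same approach as the paper: both construct the bijection via incidence duality, with the paper packaging this abstractly as the dual hypergraph operation $\cdot^\perp$ and invoking Lemma~\ref{dual}, while you unfold the construction explicitly and are more careful about well-definedness on equivalence classes. One minor point: you have written $\rho$ as a map $\mathcal{S}\to\mathcal{L}$ rather than $\mathcal{L}\to\mathcal{S}$ as in the statement, but since you also construct its inverse this is immaterial.
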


The correspondence $\rho$ is via a combinatorial construction known as the \emph{dual hypergraph}. Let $H$ be an irrepeating hypergraph. The \emph{dual} of $H$, denoted $H^\perp$, is the hypergraph with vertex set identified with the hyperedges of $H$, and hyperedges identified with vertices of $H$, where the incidence relations of $H^\perp$ are the reverse of those of $H$. That is $$V(H^\perp):=\{v_f : f\in E(H)\},$$ and $$E(H^\perp):=\{e_u : u\in V(H), v_{f}\in e_u\iff f\in N_H(u)\}.$$ The proof of Proposition ~\ref{corr} requires a couple of easy facts on the operation $\cdot^\perp$ given in the following lemma, which follows directly from the definition.

\begin{lemma}\label{dual}
Let $\mathcal{H}$ be the space of isomorphism classes of irrepeating hypergraphs. Then $\cdot^\perp$ is an involution in $\mathrm{Sym}(\mathcal{H})$. Moreover, $|V(H^\perp)|=|E(H)|$ and $|E(H^\perp)|=|V(H)|$ for all $H\in\mathcal{H}$.
\end{lemma}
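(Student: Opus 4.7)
The plan is to verify each assertion by unpacking the definition of $H^\perp$ directly. The cardinality claims are immediate: the vertex set $V(H^\perp) = \{v_f : f \in E(H)\}$ is in bijection with $E(H)$, and the edge set $E(H^\perp) = \{e_u : u \in V(H)\}$ is in bijection with $V(H)$. Note also that since $H$ is irrepeating, its hyperedges are distinct, so the neighbourhood multisets $N_H(u)$ are genuine sets, which removes any multiplicity subtlety in what follows.

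First I would show that $\cdot^\perp$ maps $\mathcal{H}$ into itself, i.e., that $H^\perp$ is irrepeating whenever $H$ is. Two hyperedges $e_u, e_{u'}$ of $H^\perp$ coincide iff $\{f \in E(H) : u \in f\} = \{f \in E(H) : u' \in f\}$, i.e., $N_H(u) = N_H(u')$, which forces $u = u'$ because the vertices of $H$ have distinct neighbourhoods. For vertex neighbourhoods in $H^\perp$, one computes $N_{H^\perp}(v_f) = \{e_u : f \in N_H(u)\} = \{e_u : u \in f\}$, so this set is determined by the hyperedge $f$ of $H$; equality of two such neighbourhoods would force $f = f'$ as edges of $H$, hence $v_f = v_{f'}$, since the hyperedges of $H$ are distinct.

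Next I would verify that $\cdot^\perp$ is an involution on isomorphism classes. Unwinding the definition of $(H^\perp)^\perp$ yields a hypergraph with vertices $\{v_{e_u} : u \in V(H)\}$ and hyperedges $\{e_{v_f} : f \in E(H)\}$, and the incidence relation satisfies $v_{e_u} \in e_{v_f}$ iff $v_f \in e_u$ iff $u \in f$. Hence the relabelling $v_{e_u} \mapsto u$ and $e_{v_f} \mapsto f$ is an isomorphism $(H^\perp)^\perp \to H$. Well-definedness on isomorphism classes is equally formal: an isomorphism $\phi : H \to H'$ induces an isomorphism $H^\perp \to (H')^\perp$ by $v_f \mapsto v_{\phi(f)}$ and $e_u \mapsto e_{\phi(u)}$, and this assignment respects incidences by construction.

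The main obstacle is purely notational: one must keep the formal labels $v_f$ and $e_u$ distinct from the underlying vertices and edges they index, and remember that the irrepeating condition is what allows neighbourhoods to be treated as sets rather than multisets. Once that bookkeeping is in place, every part of the lemma reduces to a one-line check against the definition, which is why the authors remark that it follows directly.
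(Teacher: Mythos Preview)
Your proposal is correct and is exactly the direct unpacking of the definition that the paper has in mind; the paper itself gives no written proof beyond the remark that the lemma ``follows directly from the definition.'' Your only addition is making explicit that the irrepeating hypothesis is what guarantees the maps $f\mapsto v_f$ and $u\mapsto e_u$ are injective (so the cardinality statements hold) and that $H^\perp$ is again irrepeating, which is precisely the bookkeeping the paper elides.
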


\begin{proof}[Proof of Proposition ~\ref{corr}]
Let $\mathcal{B}$ be a base for $\S_{n,\leq r}$ of size $l$. By Lemma ~\ref{dual}, since $\mathcal{B}$ is irrepeating as a hypergraph it has an irrepeating dual, $H$, say, with $l$ vertices, $n$ edges, and maximum vertex degree at most $r$. That is, $H$ is an $(l,n,r)$-hypergraph. 

On the other hand, given an $(l,n,r)$-hypergraph, $K$, we deduce from Lemma ~\ref{dual} that $|V(K^\perp)|=n$, $|E(K^\perp)|=l$, and the largest edge of $K^\perp$ has size at most $r$. Moreover, Lemma ~\ref{dual} establishes that $K^\perp$ is irrepeating and so after relabelling the vertices of $K^\perp$ as $1,2,\dots,n$, the edges of the resulting hypergraph are indeed a base for $\S_{n,\leq r}$ of size $l$. Therefore, by Lemma ~\ref{dual} if we define $\rho$ to return the edge set of the composition of the dual operation, $\cdot^\perp$, together with any such relabelling of vertices, then $\rho:\mathcal{L}\to\mathcal{S}$ is a bijection.
\end{proof}


It now follows that to determine the base size of $\S_{n,\leq r}$ --- and hence, by Lemma ~\ref{hallem}, $\S_{n,r}$ --- it suffices to determine the minimum number of vertices of an irrepeating hypergraph with $n$ edges and maximum degree at most $r$. 

A hypergraph is called \emph{$k$-uniform} if its edge set consists only of edges of size $k$. A $k$-uniform hypergraph is called \emph{nearly-regular} if its degree sequence $a_1\geq a_2\geq\cdots\geq a_l$ satisfies $a_1-a_l\leq 1$. We use a result of Behrens et al.~\cite{beh} to prove the final lemma of this section.

\begin{lemma}\label{kexist}
Let $k,l,$ and $s$ be positive integers with $s\leq{l\choose k}$. Then there exists a nearly-regular $k$-uniform hypergraph on $l$ vertices with $s$ edges and highest degree $\left\lceil\frac{k}{l}s\right\rceil$.
\end{lemma}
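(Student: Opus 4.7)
Write $q := \lfloor ks/l \rfloor$ and $t := ks - lq$, so that $0 \le t < l$ and $\lceil ks/l \rceil = q+1$. Since any $k$-uniform hypergraph on $l$ vertices with $s$ edges has degrees summing to $ks$, a nearly-regular one with maximum degree $\lceil ks/l \rceil$ must have exactly $t$ vertices of degree $q+1$ and $l-t$ of degree $q$; conversely, any realisation of the sequence $\vd := ((q+1)^{t}, q^{l-t})$ automatically has all the desired properties. The task therefore reduces to showing that $\vd$ is realisable as the degree sequence of a simple $k$-uniform hypergraph on $l$ vertices.

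The plan is to invoke the $k$-uniform degree sequence realisation theorem of Behrens et al.~\cite{beh}. I would first dispose of the trivial necessary conditions: the divisibility $k \mid |\vd|$ holds since $|\vd| = ks$, while the capacity bound $q+1 \le \binom{l-1}{k-1}$ follows from $s \le \binom{l}{k}$ via the identity $k\binom{l}{k}/l = \binom{l-1}{k-1}$. Next I would verify the Erd\H{o}s--Gallai-type inequalities that constitute the main content of~\cite{beh}; the key simplification is that for a nearly-regular sequence, prefix sums of $\vd$ are linear in the prefix length, so each required inequality collapses to a routine comparison of binomial coefficients.

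The main obstacle is parsing the precise form of the inequalities in~\cite{beh} and confirming they collapse as expected for~$\vd$; these conditions can be delicate for genuinely irregular sequences, but the near-regularity of $\vd$ renders the verification essentially mechanical. Once $\vd$ is realised, the resulting hypergraph has exactly $s = \tfrac{1}{k}\bigl(t(q+1) + (l-t)q\bigr)$ edges, is nearly-regular by construction, and has maximum degree $q+1 = \lceil ks/l \rceil$, as required.
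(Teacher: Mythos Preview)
Your approach is essentially identical to the paper's: write down the unique nearly-regular degree sequence summing to $ks$, check the obvious necessary conditions, and invoke \cite[Theorem~2.1]{beh} to realise it. Two small remarks. First, your assertion $\lceil ks/l\rceil = q+1$ is false when $l\mid ks$; in that case $t=0$, the sequence is constant equal to $q$, and the maximum degree is $q=\lceil ks/l\rceil$, not $q+1$ --- the paper handles this by a one-line case split. Second, the paper applies \cite[Theorem~2.1]{beh} after verifying only that $a_1\le\binom{l-1}{k-1}$ (together with the trivial divisibility $k\mid ks$); that theorem is precisely a realisability result for nearly-regular sequences, so the Erd\H{o}s--Gallai-type verification you anticipate is not needed.
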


\begin{proof}
If $l$ divides $ks$ then set $d=l$, otherwise let $d$ be the unique nonnegative integer such that $d\left\lceil\frac{k}{l}s\right\rceil+(l-d)\left\lfloor\frac{k}{l}s\right\rfloor=ks$. Consider the sequence $(a_1,a_2,\dots,a_l)$ where $$a_i=\begin{cases}\left\lceil\frac{k}{l}s\right\rceil&\text{ for $1\leq i\leq d$}\\\left\lfloor\frac{k}{l}s\right\rfloor&\text{ for $d+1\leq i\leq l$}.\end{cases}$$ From ${l\choose k}\geq s$, we deduce ${l-1\choose k-1}=\frac{k}{l}{l\choose k}\geq\frac{k}{l}s$, and hence $${l-1\choose k-1}=\left\lceil{l-1\choose k-1}\right\rceil\geq \left\lceil\frac{k}{l}s\right\rceil =a_1.$$ Therefore, by ~\cite[Theorem 2.1]{beh}, there exists a $k$-uniform hypergraph $H$ with degree sequence $(a_1,a_2,\dots, a_l)$, so $H$ is nearly-regular with $s$ edges.
\end{proof}

With duality in mind, to construct a small base for $\S_{n,\leq r}$ it suffices to build an irrepeating hypergraph with some fixed number of edges, but neighbourhoods as small as possible. A natural way to do this is to succesively add a smallest possible edge (in terms of set size), whilst ensuring no two vertices end up with the same neighbourhood. This is precisely how the main result of this section works. Recall $m_r(l,k)=\frac{1}{k}\left(lr-\sum_{i=1}^{k-1}i{l\choose i}\right)$ --- when clear from context, we omit the subscript $r$.

\begin{prop}\label{hypexist}
Fix positive integers $l,n$, and $r$ with $n\geq 2r$. Suppose there exists some $k\leq l+1$ such that $0\leq m(l,k)\leq{l\choose k}$, and $\sum_{i=0}^{k-1}{l\choose i}+m(l,k)\geq n.$
Then there exists an $(l,n,r)$-hypergraph.
\end{prop}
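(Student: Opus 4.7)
The plan is to build an explicit $(l,n,r)$-hypergraph $H$ on vertex set $[l]$ by taking every subset of size less than some $k$, then topping up with $s$ carefully chosen $k$-element edges.

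First I would reduce to a $k$ where $s := n - \sum_{i=0}^{k-1}\binom{l}{i}$ is nonnegative. This rests on the recurrence
\[
m(l, k-1) = \tfrac{k}{k-1}\, m(l,k) + \binom{l}{k-1},
\]
which follows directly from the identity $i\binom{l}{i} = l\binom{l-1}{i-1}$. Assuming $m(l,k)\geq 0$, this simultaneously preserves $m(l, k-1)\geq 0$ and yields
\[
\sum_{i=0}^{k-2}\binom{l}{i} + m(l,k-1) \;\geq\; \sum_{i=0}^{k-1}\binom{l}{i} + m(l,k) \;\geq\; n,
\]
so we may iterate until $s \geq 0$. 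Choosing such a $k$ minimally then forces $s \leq m(l,k) \leq \binom{l}{k}$.

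Next, Lemma \ref{kexist} produces a nearly-regular $k$-uniform hypergraph $K$ on $[l]$ with exactly $s$ edges and maximum degree $\lceil ks/l\rceil$, and I set $H := K \cup \{A\subseteq[l] : |A|<k\}$, so that $|E(H)| = n$. Using $i\binom{l}{i} = l\binom{l-1}{i-1}$ once more, $\tfrac{k}{l}\,m(l,k) = r - \sum_{i=1}^{k-1}\binom{l-1}{i-1}$ is an \emph{integer}; combined with $s \leq m(l,k)$, this upgrades to $\lceil ks/l\rceil \leq r - \sum_{i=1}^{k-1}\binom{l-1}{i-1}$. Each vertex of $H$ then has degree at most $\sum_{i=1}^{k-1}\binom{l-1}{i-1} + \lceil ks/l\rceil \leq r$, as required.

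It remains to check irrepeatedness. All edges of $H$ are distinct by construction; for neighbourhoods, provided $k \geq 2$ every singleton $\{v\}$ is an edge of $H$, whence $\{v\} \in N_H(v) \setminus N_H(u)$ for $u \neq v$ and vertex neighbourhoods are pairwise distinct. I expect the main obstacle to be the degenerate small-$k$ boundary cases (where including the empty edge can produce repeated neighbourhoods), which I would address by a local perturbation of the construction (for instance, swapping the empty edge for a $2$-element edge once the bound $n \geq 2r$ forces $l$ to be sufficiently large); such a swap leaves the degree bound intact. Once the descent step is established, the degree estimate drops out of the definition of $m(l,k)$ and the irrepeatedness check is essentially immediate.
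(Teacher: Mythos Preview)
Your construction is essentially the paper's: take every subset of $[l]$ of size $<k$ as an edge, add a nearly-regular $k$-uniform piece supplied by Lemma~\ref{kexist}, and verify the degree bound via the identity $\tfrac{k}{l}\,m(l,k)=r-\sum_{i=1}^{k-1}\binom{l-1}{i-1}$. The only difference is that the paper takes $\lfloor m(l,k)\rfloor$ top-up edges and then deletes size-$k$ edges down to $n$, whereas you first decrement $k$ so as to hit $n$ exactly; this is a cosmetic variation, and in fact your descent step makes explicit a case the paper's ``delete edges of size $k$'' silently assumes away, namely $\sum_{i<k}\binom{l}{i}>n$.

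One correction: after decrementing, the middle inequality of your chain $s\le m(l,k)\le\binom{l}{k}$ can fail --- try $l=5$, $r=7$, $n=14$, where you descend from $k=3$ to $k=2$ and find $m(5,2)=15>10=\binom{5}{2}$. Fortunately you only need $s\le\binom{l}{k}$ (for Lemma~\ref{kexist}) and $s\le m(l,k)$ (for the degree estimate), and both of these do survive the descent: the first because the final decrement was triggered by $s_{\mathrm{old}}<0$, so $s_{\mathrm{new}}=s_{\mathrm{old}}+\binom{l}{k_{\mathrm{new}}}<\binom{l}{k_{\mathrm{new}}}$; the second is just the preserved sum hypothesis rewritten. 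Also, ``choosing such a $k$ minimally'' is backwards --- you want to stop decrementing as soon as $s\ge 0$, i.e.\ take the \emph{largest} such $k$ below the original.
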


\begin{proof}
We construct such a hypergraph. Let $H_1$ be the unique irrepeating hypergraph on $l$ vertices with all possible edges of size at most $k-1$. 
By Lemma ~\ref{kexist}, since $\lfloor m(l,k)\rfloor\leq {l\choose k}$ there exists some $k$-uniform hypergraph $H_2$ on $l$ vertices with exactly $\lfloor m(l,k)\rfloor$ edges, and highest degree at most $$\left\lceil \frac{k}{l}\left\lfloor \frac{1}{k}\left(lr-\sum_{i=1}^{k-1}i{l\choose i}\right)\right\rfloor\right\rceil \leq \left\lceil \frac{k}{l}\left( \frac{1}{k}\left(lr-\sum_{i=1}^{k-1}i{l\choose i}\right)\right)\right\rceil = r-\sum_{i=1}^{k-1}{l-1\choose i-1}.$$ Let $H$ be the irrepeating hypergraph obtained by adding the edges of $H_2$ to $H_1$. Then $H$ has $l$ vertices, exactly $\sum_{i=0}^{k-1}{l\choose i}+m(l,k)\geq n$ edges, and highest degree at most $$\sum_{i=1}^{k-1}{l-1\choose i-1} +\left(r-\sum_{i=1}^{k-1}{l-1\choose i-1}\right)=r.$$ 
By arbitrarily deleting edges of size $k$ until exactly $n$ edges remain we do not increase the degree of any vertex, hence we obtain an $(l,n,r)$-hypergraph.
\end{proof}

\section{Base size of $\S_{n,r}$ and $\mathrm{A}_{n,r}$}\label{minb}

In this section we use the tools developed in Section ~\ref{mach} to deduce Theorem ~\ref{minbase}. We then illustrate how the construction works in practice with a brief example, before proving a couple of corollaries. 

We first give a description of the quantity $m_r(l,k)$ in terms of bases for $\S_{n,r}$ --- it is useful to start by stating the following lemma.

\begin{lemma}\label{doubcount}
Let $n$ and $r$ be positive integers with $n\geq 2r$, and $\mathcal{B}$ a base for $\S_{n,r}$ with $|\mathcal{B}|=l$. Then $lr=\sum_{x\in [n]}|N_{\mathcal{B}}(x)|$.
\end{lemma}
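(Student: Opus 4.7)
The plan is to prove this by a one-line double-counting argument, since the only non-trivial content is that every element of $\mathcal{B}$ has size exactly $r$.

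First, I would observe that because $\mathcal{B}$ is a base for $\S_{n,r}$ (not $\S_{n,\leq r}$), each $e\in\mathcal{B}$ is by definition an $r$-subset of $[n]$, so $|e|=r$. Next, I would consider the incidence set
\[
I := \{(x,e)\in [n]\times\mathcal{B} : x\in e\}
\]
and evaluate $|I|$ in two ways. Summing over $x\in[n]$ first, for each fixed $x$ the set of $e\in\mathcal{B}$ containing $x$ is, by definition, $N_{\mathcal{B}}(x)$, giving $|I|=\sum_{x\in[n]}|N_{\mathcal{B}}(x)|$. Summing over $e\in\mathcal{B}$ first gives $|I|=\sum_{e\in\mathcal{B}}|e|=\sum_{e\in\mathcal{B}}r=|\mathcal{B}|r=lr$. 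Equating the two expressions yields the claim.

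There is no real obstacle: the hypothesis $n\geq 2r$ is not needed for the counting itself, and the neighbourhoods $N_{\mathcal{B}}(x)$ being multisets makes no difference here because the hypergraph $([n],\mathcal{B})$ is irrepeating (in particular, its edges are distinct), so each pair $(x,e)$ is counted once on each side.
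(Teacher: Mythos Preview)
Your proof is correct and is exactly the double-counting argument the paper uses; the paper's entire proof is the single sentence ``Count pairs $(B,x)$ where $x\in B\in\mathcal{B}$ in two ways,'' which you have written out in full.
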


\begin{proof}
Count pairs $(B,x)$ where $x\in B\in\mathcal{B}$ in two ways.
\end{proof}

Given a base $\mathcal{B}$ as in the statement of Lemma ~\ref{doubcount} and some positive integer $k$, set $A_1:=\{x\in [n] : |N_{\mathcal{B}}(x)|< k\}$ and $A_2:=\{x\in [n] : |N_{\mathcal{B}}(x)|\geq k\}$. We can rewrite the sum in Lemma ~\ref{doubcount} as $lr=\left(\sum_{x\in A_1}|N_{\mathcal{B}}(x)|\right)+\left(\sum_{x\in A_2}|N_{\mathcal{B}}(x)|\right)$, hence $\sum_{x\in A_2}|N_{\mathcal{B}}(x)|=lr-\left(\sum_{x\in A_1}|N_{\mathcal{B}}(x)|\right)$. Since bases are irrepeating, it follows that there are at most ${l\choose i}$ distinct neighbourhoods of size $i$, hence $lr-\left(\sum_{x\in A_1}|N_{\mathcal{B}}(x)|\right)$ is minimally $lr-\sum_{i=1}^{k-1}i{l\choose i}=km_r(l,k)$. On the other hand $k|A_2|\leq\sum_{x\in A_2}|N_{\mathcal{B}}(x)|$. Therefore $m_r(l,k)$ estimates the minimum number of points of $[n]$ which have $\mathcal{B}$-neighbourhoods of size at least $k$.

We now proceed with the proof of the symmetric group case of Theorem ~\ref{minbase}.

\begin{proof}[Proof of Theorem ~\ref{minbase} for $\S_n$]
Let $\mathcal{B}$ be a minimum base for $\S_{n,r}$ so that $b:=b(\S_{n,r})=|\mathcal{B}|$. Let $l$ be minimal satisfying the conditions of Proposition ~\ref{hypexist}. Then there exists an $(l,n,r)$-hypergraph, $H$, say, and by Proposition ~\ref{corr}, $\rho(H)$ is a base of size $l$ for $\S_{n,\leq r}$. By the minimality of $b$ and Lemma ~\ref{hallem} we deduce $l\geq b$. 

Now, let $h\leq b+1$ be maximal such that $0\leq\left(br-\sum_{i=1}^{h-1}i{b\choose i}\right)$ (note $h$ exists since the inequality holds with $h=1<b+1$). It follows from the maximality of $h$ that $$0\leq\frac{1}{h}\left(br-\sum_{i=1}^{h-1}i{b\choose i}\right)\leq{b\choose h}.$$ From Lemma ~\ref{doubcount} and the subsequent discussion we deduce that $$br\geq\left(\sum_{i=0}^{h-1}i{b\choose i}\right)+h\left(n-\sum_{i=0}^{h-1}{b\choose i}\right),$$ where the second summand describes the fact that any point not contributing to the first summand has neighbourhood of size at least $h$. Rearranging gives $$\frac{1}{h}\left(br-\sum_{i=0}^{h-1}i{b\choose i}\right)\geq n-\sum_{i=0}^{h-1}{b\choose i}$$ and so $\sum_{i=0}^{h-1}{b\choose i}+m(b,h)\geq n$. Thus all conditions of Proposition ~\ref{hypexist} are satisfied. By definition $l$ is the smallest positive integer satisfying the conditions of Proposition ~\ref{hypexist} and so $l\leq b$, hence $l=b$ as desired.
\end{proof}

We now consider the alternating case.

\begin{proof}[Proof of Theorem ~\ref{minbase} for $\mathrm{A}_n$]
We prove the result by showing that $b(\mathrm{A}_{n+1,r})=b(\mathrm{S}_{n,r}).$ Let $\mathcal{B}$ be a base for $\S_{n,r}$ of size $b(\S_{n,r})$. Consider $\mathcal{B}$ as a collection of $r$-subsets of $[n+1]$, with $n+1$ having empty neighbourhood. Since $\mathcal{B}$ is a base for $\S_{n,r}$ at most one element of $[n]$ has an empty $\mathcal{B}$-neighbourhood, hence at most two elements of $[n+1]$ do, with all others distinct. But being a base for $\mathrm{A}_{n+1,r}$ is equivalent to having at most two points with equal neighbourhoods, hence $\mathcal{B}$ is a base for $\mathrm{A}_{n+1,r}$ of size $b(\S_{n,r})$. This shows that $b(\mathrm{A}_{n+1,r})\leq b(\S_{n,r})$.

On the other hand, let $\mathcal{C}$ be a base for $\mathrm{A}_{n+1,r}$. If there are two points $x$ and $y$ with the same $\mathcal{C}$-neighbourhood, then assume without loss of generality that $x=n+1$. Let $\mathcal{C}'$ be obtained from $\mathcal{C}$ by deleting $n+1$ from all $r$-sets in $\mathcal{C}$. Then $\mathcal{C}'$ is a collection of subsets of $[n]$ of size at most $r$ such that each element of $[n]$ has a distinct neighbourhood, that is, a base for $\S_{n,\leq r}$. Thus $b(\mathrm{A}_{n+1,r})\geq b(\S_{n,\leq r})$, and the result follows from Lemma ~\ref{hallem}.
\end{proof}

One can use the proof of the $\S_n$ case of Theorem ~\ref{minbase}, together with the proof of ~\cite[Theorem 2.1]{hal} to construct a minimum base for $\S_{n,r}$.

\begin{ex}\label{hypex}
Suppose we wish to construct a minimum base for $\S_{18,7}$. The pair $(l,k)=(5,3)$ satisfies the conditions of Theorem ~\ref{hypexist} with $l$ minimal. Therefore we start by taking the complete graph $K_5$ adorned with all loops and the empty edge. Following the procedure of the proof we then find a nearly-regular 3-uniform hypergraph on five points with highest degree at most $2$, and exactly two edges. In this case any simple 3-uniform hypergraph with exactly two edges will work, so we may pick one arbitrarily. At this point we have obtained the $(5,18,7)$-hypergraph in Figure ~\ref{hypfig}.

\begin{figure}[H]
  \includegraphics[width=4cm]{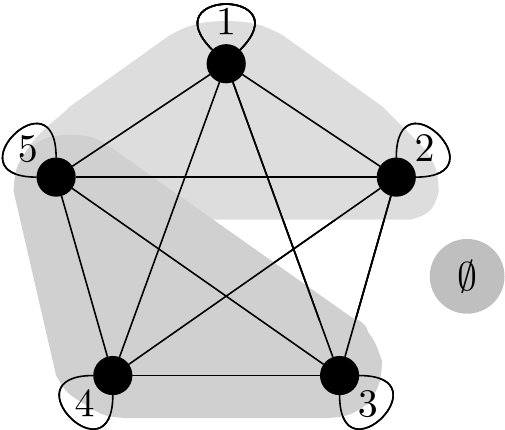}
\caption{The $(5,18,7)$-hypergraph constructed in Example ~\ref{hypex}, with vertices labelled arbitrarily.}\label{hypfig}
\end{figure}

Taking the dual and relabelling the vertices as $1,2,\dots, 18$ using the simplicial ordering gives $$\begin{array}{l}\{\{2,7,8,9,10,17\},\{3,7,11,12,13,17\},\{4,8,11,14,15,18\},\\\{5,9,12,14,16,18\},\{6,10,13,15,16,17,18\}\}\end{array}$$ as a minimum base for $\S_{18,\leq 7}$ --- one can now use Halasi's algorithm in ~\cite{hal} to transform this into a minimum base for $\S_{18,7}$.
\end{ex}

In ~\cite{hal} Halasi shows that $b(\S_{n,r})\geq\left\lceil\frac{2n-2}{r+1}\right\rceil$ with equality when $n\geq r^2$; using Theorem ~\ref{minbase} we can extend this result.

\begin{cor}\label{newlow}
Let $n$ and $r$ be positive integers $n\geq (r^2+r)/2$. Then $b(\S_{n,r})=\left\lceil\frac{2n-2}{r+1}\right\rceil$.
\end{cor}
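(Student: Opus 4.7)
The plan is to combine Halasi's lower bound $b(\S_{n,r})\ge\lceil(2n-2)/(r+1)\rceil$ with a matching upper bound obtained from Theorem~\ref{minbase}. By Theorem~\ref{minbase}, it suffices to exhibit, for $l:=\lceil(2n-2)/(r+1)\rceil$, some $k\le l+1$ satisfying $0\le m_r(l,k)\le\binom{l}{k}$ and $\sum_{i=0}^{k-1}\binom{l}{i}+m_r(l,k)\ge n$; combined with Halasi's bound, this forces equality $b(\S_{n,r})=l$.

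The natural candidate is $k=2$. A direct computation gives
$$m_r(l,2)=\tfrac{1}{2}\bigl(lr-l\bigr)=\tfrac{l(r-1)}{2},$$
which is plainly nonnegative and satisfies $m_r(l,2)\le\binom{l}{2}=\tfrac{l(l-1)}{2}$ exactly when $r\le l$. The ``size'' condition becomes
$$1+l+\tfrac{l(r-1)}{2}=1+\tfrac{l(r+1)}{2}\ge n,$$
which rearranges to $l\ge(2n-2)/(r+1)$, and this holds by the very definition of $l$.

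The only nontrivial step is verifying $l\ge r$, i.e.\ $\lceil(2n-2)/(r+1)\rceil\ge r$; this is the single place where the hypothesis $n\ge(r^2+r)/2$ is used. Substituting the bound on $n$ yields
$$\frac{2n-2}{r+1}\ge\frac{r(r+1)-2}{r+1}=r-\frac{2}{r+1}>r-1$$
for all $r\ge 2$, so the ceiling is at least $r$, as required. The condition $k=2\le l+1$ is automatic since $l\ge r\ge 1$. The case $r=1$ is handled separately (and trivially): then $l=n-1$, $m_1(l,2)=0$, and $1+l+0=n$, so the conditions hold and the formula reduces to the known value $b(\S_{n,1})=n-1$. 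I do not expect any real obstacle here; the result is essentially a careful verification that the $k=2$ branch of Theorem~\ref{minbase} is operative precisely when $n$ is large enough relative to $r$.
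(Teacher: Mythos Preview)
Your proof is correct and follows essentially the same approach as the paper: combine Halasi's lower bound with the $k=2$ case of Theorem~\ref{minbase}. Your ceiling estimate $\lceil(2n-2)/(r+1)\rceil>r-1$ handles the boundary value $n=(r^2+r)/2$ within the same argument, whereas the paper first treats $n\ge(r^2+r+2)/2$ (where $(2n-2)/(r+1)\ge r$ holds without the ceiling) and then disposes of $n=(r^2+r)/2$ separately by deleting the empty-neighbourhood point from a base for $\S_{n+1,\le r}$; otherwise the two arguments coincide.
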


\begin{proof}
First let $n\geq (r^2+r+2)/2$. Then $\left\lceil\frac{2n-2}{r+1}\right\rceil\geq\frac{2n-2}{r+1}\geq r$. Setting $k=2$ and $l=\left\lceil\frac{2n-2}{r+1}\right\rceil$ gives \begin{equation}\label{eq1}m(l,k)=\frac{1}{2}\left(lr-l\right)=\frac{1}{2}\left(\left\lceil\frac{2n-2}{r+1}\right\rceil (r-1)\right)\end{equation} and $$0\leq \frac{1}{2}\left(\left\lceil\frac{2n-2}{r+1}\right\rceil (r-1)\right)\leq \frac{1}{2}\left(\left\lceil\frac{2n-2}{r+1}\right\rceil \left(\left\lceil\frac{2n-2}{r+1}\right\rceil-1\right)\right)={l\choose k}.$$ Moreover, \begin{equation}\nonumber\begin{split}\sum_{i=0}^{k-1}{l\choose i}+m(l,k)&=1+\left\lceil\frac{2n-2}{r+1}\right\rceil+\frac{1}{2}\left(\left\lceil\frac{2n-2}{r+1}\right\rceil (r-1)\right)\quad\quad\quad\text{by (\ref{eq1})}\\&\geq 1+\frac{2}{r+1}(n-1)+\frac{r-1}{r+1}(n-1)\\&=n.\end{split}\end{equation} Therefore, $l$ satisfies the conditions of Theorem ~\ref{minbase}, and hence $$b(\S_{n,r})\leq \left\lceil\frac{2n-2}{r+1}\right\rceil.$$ Finally, one can construct a base for $n=(r^2+r)/2$ as follows. Our construction yields a base for $\S_{n+1,\leq r}$ with exactly one point with empty neighbourhood --- by deleting this point and relabelling if necessary we get a base for  $\S_{n,\leq r}$ of the desired size. We deduce equality from Halasi's lower bound ~\cite[Theorem 3.2]{hal}.
\end{proof}

We can continue to use Corollary ~\ref{minbase} to push even further down, into a range in which no formulae were previously known, although the formula is less pleasant.

\begin{cor}\label{cor2}
Let $n$ and $r$ be positive integers satisfying $\frac{r^2+r}{2}>n\geq r^{3/2}+\frac{r}{2}+1$. Then $$b(\S_{n,r})=\left\lceil\left(3\left(2n+r-\frac{5}{4}\right)+r^2\right)^\frac{1}{2}-r-\frac{3}{2}\right\rceil.$$
\end{cor}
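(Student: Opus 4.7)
The plan is to apply Theorem \ref{minbase} with the choice $k=3$. Since the hypothesis $n<\tfrac{r^2+r}{2}$ places us just below the regime where Corollary \ref{newlow} (using $k=2$) applies, I expect $k=3$ to be both valid and effectively the unique valid choice in our range.

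The key computation is $m_r(l,3)=\tfrac{1}{3}\bigl(lr - l - 2\binom{l}{2}\bigr)=\tfrac{l(r-l)}{3}$, which reduces the growth condition $\sum_{i=0}^{2}\binom{l}{i}+m_r(l,3)\geq n$ (after clearing denominators) to the quadratic inequality
\[
l^2+(2r+3)l+6-6n\geq 0.
\]
Taking the positive root yields $l\geq \sqrt{r^2+3(2n+r-\tfrac{5}{4})}-r-\tfrac{3}{2}$, whose ceiling is the formula in the statement. To conclude the upper bound from Theorem \ref{minbase} I still need to verify the side conditions: $m_r(l,3)\geq 0$, equivalent to $l\leq r$, which I would derive by squaring from $n\leq \tfrac{r^2+r}{2}-1$; and $m_r(l,3)\leq \binom{l}{3}$, equivalent to $l(l-1)\geq 2(r-1)$, which is precisely where the hypothesis $n\geq r^{3/2}+\tfrac{r}{2}+1$ enters.

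For the matching lower bound on $b(\S_{n,r})$ I would bypass Theorem \ref{minbase} entirely and argue directly from Lemma \ref{doubcount} and the discussion following it. Given any base $\mathcal{B}$ of size $l$ for $\S_{n,r}$, the irrepeating property forces at most $1$ vertex of $[n]$ to have $\mathcal{B}$-neighbourhood of size $0$, at most $l$ to have size $1$, and at most $\binom{l}{2}$ to have size $2$; every remaining vertex has size at least $3$. Using these three upper bounds on the respective counts to lower-bound the total number of incidences from Lemma \ref{doubcount} gives $lr\geq 3n-3-2l-\binom{l}{2}$, which rearranges (after multiplying by $2$) to the same quadratic $l^2+(2r+3)l+6-6n\geq 0$, and hence to the same ceiling as a lower bound on $l$.

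The main obstacle will be the verification of $m_r(l,3)\leq \binom{l}{3}$ along the boundary $n=r^{3/2}+\tfrac{r}{2}+1$. Substituting this value and simplifying, the radicand turns out to equal $(r+3\sqrt{r}-\tfrac{3}{2})^2+9\sqrt{r}$, which forces $l\geq \lceil 3\sqrt{r}-3\rceil$; from this bound the inequality $l(l-1)\geq 2(r-1)$ follows by a short calculation for all $r\geq 6$, and the stated range is empty for $r\leq 5$. Crucially, the hypothesis $n\geq r^{3/2}+\tfrac{r}{2}+1$ is exactly what keeps us in the $k=3$ regime: for smaller $n$ one would need $k\geq 4$, producing a closed-form formula of an entirely different shape.
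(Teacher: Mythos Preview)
Your proposal is correct and follows essentially the same approach as the paper: apply Theorem~\ref{minbase} with $k=3$, reduce the growth condition to the quadratic $l^{2}+(2r+3)l+6-6n\geq 0$, solve for $l$, and verify the side conditions $l\leq r$ and $l(l-1)\geq 2(r-1)$ from the upper and lower hypotheses on $n$ respectively. Your direct counting argument for the lower bound (bounding $n_0,n_1,n_2$ and using Lemma~\ref{doubcount}) is in fact slightly cleaner than the paper's, which first argues separately that no base can have all neighbourhoods of size at most~$2$ before invoking the discussion after Lemma~\ref{doubcount}.
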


\begin{proof}
Suppose first that there is some base $\mathcal{B}=\{B_1,\dots, B_k\}$ for $\S_{n,r}$ with largest vertex neighbourhood of size at most 2. Then $B_2$ has at least $r-1$ points not in $B_1$, $B_3$ has at least $r-2$ points not in $B_1\cup B_2$, and so on. Thus $n\geq r+(r-1)+\cdots+1=(r^2+r)/2$, a contradiction. Therefore, there is no base for $\S_{n,r}$ with largest neighbourhood size at most 2. Therefore if $\mathcal{B}$ is any base for $\S_{n,r}$ then at least one point has a neighbourhood of size at least three, thus by the discussion following Lemma ~\ref{doubcount}, $1+b+{b\choose 2}+m(b,3)\geq n$. That is, $$n\leq1+b+{b\choose 2}+\frac{1}{3}(br-b-b(b-1))=\frac{b^2+(2r+3)b}{6}+1,$$ solving for $b$ (via the quadratic formula, e.g.) gives $$b\geq\left(3\left(2n+r-\frac{5}{4}\right)+r^2\right)^\frac{1}{2}-r-\frac{3}{2}.$$
The above also shows that if we set $l$ to be (the ceiling of) the quantity above and $k=3$, then $l$ is the minimum positive integer satisfying $1+l+{l\choose 2}+m(l,k)\geq n$. Therefore, by Theorem ~\ref{minbase} if we can show $0\leq m(l,k)=\frac{1}{3}(lr-l^2)\leq {l\choose 3}$, or equivalently that $l\leq r\leq (l^2-l)/2+1$, then $b(\S_{n,r})=l$.

First, $l>r$ implies $(24n+4r^2+12r-15)^{1/2}>4r+3$. Rearranging gives $n>(r^2+r+2)/2$, a contradiction, so $l\leq r$. Finally, since $n\geq  r^{3/2}+\frac{r}{2}+1$, a straight-forward calculation shows that $r\leq (l^2-l)/2+1$, hence the result.
\end{proof}

\begin{rk}
In fact, the result holds for $(r^2+r)/2>n\geq \frac{(8r^3+25r^2+4r-28)^\frac{1}{2}}{6}+\frac{r}{2}+1$ --- the lower bound of $n\geq r^{3/2}+\frac{r}{2}+1$ is used in the statement simply for presentation.
\end{rk}

One could continue to play this game, obtaining explicit formulae for different ranges of $n$, however the calculations quickly become increasingly complicated.

\end{document}